\documentclass[12pt,reqno]{amsart}
\usepackage{amsmath}
\usepackage{fullpage}

\theoremstyle{plain}
\newtheorem{thm}{Theorem}[section]

\theoremstyle{definition}
\newtheorem{defn}[thm]{Definition}
\newtheorem{ex}[thm]{Example}

\numberwithin{equation}{section}
\newcommand{\R}{\mathbb{R}}

\newcommand{\F}{\mathcal{F}}

\begin{document}
\title[A thermostat with delay]{Nontrivial solutions of a parameter-dependent heat flow problem with deviated arguments}


\subjclass[2020]{Primary 34B08, secondary 34B10, 34K10, 47H10}%
\keywords{Nontrivial solutions, cone, Birkhoff--Kellogg type result, deviated argument, functional boundary condition.}%

\author[A. Calamai]{Alessandro Calamai}
\address{Alessandro Calamai, 
Dipartimento di Ingegneria Civile, Edile e Architettura,
Universit\`{a} Politecnica delle Marche
Via Brecce Bianche
I-60131 Ancona, Italy}%
\email{calamai@dipmat.univpm.it}%

\author[G. Infante]{Gennaro Infante}
\address{Gennaro Infante, Dipartimento di Matematica e Informatica, Universit\`{a} della
Calabria, 87036 Arcavacata di Rende, Cosenza, Italy}%
\email{gennaro.infante@unical.it}%

\begin{abstract}
By means of a recent Birkhoff-Kellogg type theorem set in affine cones, we discuss the solvability of a parameter-dependent thermostat problem subject to deviated arguments. We illustrate in a specific example the constants that occur in our theory.
\end{abstract}

\maketitle

\section{Introduction}

In this paper we investigate the existence of \emph{nontrivial} solutions of the 
second order parameter-dependent differential equation
\begin{equation} \label{2ord-intro}
u''(t)+\lambda f(t, u(t), u(\sigma (t))=0,\ t \in [0,1],
\end{equation}
with initial conditions
\begin{equation}\label{kmt-intro}
u(t)=\omega(t),\ t \in [-r,0],
\end{equation}
and the functional boundary condition (BC)
\begin{equation}\label{fbc-intro}
\beta u'(1)+u(\eta)=\lambda B[u],
\end{equation}
where $\beta>0$,  $\eta \in (0,1)$ and $0<\beta+\eta<1$, $\lambda$ is a parameter, $\sigma$, $\omega$, $f$ are suitable continuous functions and $B$ is a suitable functional. 

The motivation for studying this problem is that it arises in heat-flow problems. To illustrate this in a simple situation, let us consider the following special case of a problem with reflection of the argument:
\begin{equation} \label{ex-intro}
 \begin{cases}
u''(t)+\lambda f(t, u(t), u(-t))=0,\ & t \in [0,1], \\
u(t)=\omega(t),\  & t \in [-1,0], \\
\beta u'(1)+u(\frac14)=0.
 \end{cases}
\end{equation}
The BVP \eqref{ex-intro} describes the steady-states of temperature $u$ of a heated bar of length 2.  
Half of the bar is kept at a prescribed temperature $\omega$ and a controller, placed at the right end of the bar, is reacting to a sensor placed in the point $t=\frac14$. The presence of the term depending on the reflection of the argument illustrates the influence of the left hand side of the bar on the right hand side; a physical problem of this kind occurs in the case of a light bulb, see~\cite{ac-gi-at-tmna} for a detailed description. For simplicity here we have taken $B\equiv 0$ in the BCs; a nonzero term $B$ can be used to model complicated controller, possibly with non-linear response. There exists a wide literature on similar kinds of heat-flow problems, for brevity we mention the papers~\cite{acgi16, genupa, fama, df-gi-jp-prse, gijwnodea, gijwems,
 nipime, SZY18, jwpomona, jwwcna04, webb-therm} 
 in the case of \emph{linear} controllers and \cite{gi-caa, kamkee, kamkee2, kapala, nipime, palamides} for \emph{nonlinear} controllers.
Note that different choices of the function $\sigma$ leads to different situations; for example a spatial delay can be modelled by $\sigma(t)=t-\tau$, where $\tau\leq r$; this has been done, for example, in the context of delay equations in~\cite{acgi16}. More general deviated arguments occurring in heat-flow problems have been studied in \cite{ac-gi-at-tmna, rub-rod-lms}.

In order to discuss the solvability of the BVP~\eqref{2ord-intro}-\eqref{kmt-intro}-\eqref{fbc-intro}, we adopt a topological approach, based on a recent variant \cite{acgi2} in affine cones of the celabrated Birkhoff--Kellogg theorem. We provide an example where we illustrate the constants that occur in our theory. 

Topological approaches in affine cones have been exploited recently in \cite{acgi16, acgi2, acgi3, djeb2014}.  The setting of affine cones seems to be helpful when dealing with equations with delay effects. Here we prove the existence of nontrivial solutions $(u,\lambda)$ of the BVP \eqref{2ord-intro}-\eqref{kmt-intro}-\eqref{fbc-intro}, by means of an associated \emph{perturbed} Hammerstein integral equation set in a suitable traslate of a cone of functions that are allowed to \emph{change sign}. We mention that another variant of the Birkhoff--Kellogg theorem, due to Krasnosel'ski\u{i} and Lady\v{z}enski\u{\i} \cite{Kra-Lady}, has been used in \cite{giems} for a cone of sign changing functions with vertex in the origin under different sets of boundary conditions and without the presence of deviated arguments.

Our results are new and complement the previous literature. In particular we use a different topological tool w.r.t.~\cite{ac-gi-at-tmna, acgi16}, where the fixed point index is used directly. Furthermore here we study a BVP which is different from the one in \cite{acgi16}, due to the presence of the deviated arguments and the nonlinearity within the BCs, and from the one in~\cite{ac-gi-at-tmna}, due to the presence of the datum $\omega$. 

\section{Solutions of perturbed integral equations in affine cones}
We recall some useful notation.
Let $(X,\| \, \|)$ be a real Banach space. A \emph{cone} $K$ of $X$  is a closed set with $K+K\subset K$, $\mu K\subset K$ for all $\mu\ge 0$ and $K\cap(-K)=\{0\}$.
For $y\in X$, the \emph{translate} of the cone $K$ is defined as
$$
K_y:=y+K=\{y+x: x\in K\}.
$$
Given a bounded and open (in the relative
topology) subset $\Omega$  of $K_y$, we denote by $\overline{\Omega}$ and $\partial \Omega$
the closure and the boundary of $\Omega$ relative to $K_y$.
Given an open
bounded subset $D$ of $X$ we denote $D_{K_y}=D \cap K_y$, an open subset of
$K_y$.

We can now state a Birkhoff--Kellogg type result, which provides the existence of non-trivial solutions of parameter-dependent  functional equations in cone translates.
\begin{thm}[\cite{acgi2}, Corollary 2.4] \label{BK-transl-norm}
Let $(X,\| \, \|)$ be a real Banach space, $K\subset X$ be a cone and
 $D\subset X$ be an open bounded set with $y \in D_{K_y}$ and
$\overline{D}_{K_y}\ne K_y$. Assume that $\F:\overline{D}_{K_y}\to K$ is
a compact map and assume that 
$$
\inf_{x\in \partial D_{K_y}}\|\mathcal{F} (x)\|>0.
$$
Then there exist $x^*\in \partial D_{K_y}$ and $\lambda^*\in (0,+\infty)$ such that $x^*= y+\lambda^* \mathcal{F} (x^*)$.
\end{thm}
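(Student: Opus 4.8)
The plan is to reduce this affine-cone statement to a classical eigenvalue problem on a cone with vertex at the origin, and then to run a fixed-point-index argument by contradiction. I would write $v=x-y$, so that each $x\in\overline{D}_{K_y}$ corresponds to a unique $v\in\overline{E}$, where $E:=(D-y)\cap K$ is a bounded, relatively open subset of $K$ containing $0$; its relative boundary $\partial E$ corresponds to $\partial D_{K_y}$, which is nonempty precisely because $D_{K_y}$ is bounded while $\overline{D}_{K_y}\ne K_y$. Setting $G(v):=\F(y+v)$, the map $G\colon\overline{E}\to K$ is compact (it is $\F$ composed with a translation), one has $\inf_{v\in\partial E}\|G(v)\|=\inf_{x\in\partial D_{K_y}}\|\F(x)\|=:m>0$, and the equation $x=y+\lambda\F(x)$ becomes $v=\lambda G(v)$. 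It then suffices to produce $v^*\in\partial E$ and $\lambda^*>0$ with $v^*=\lambda^* G(v^*)$.

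Next I would argue by contradiction, assuming that $v\ne\lambda G(v)$ for every $v\in\partial E$ and every $\lambda>0$. For $\lambda\ge 0$ consider the compact maps $T_\lambda(v):=\lambda G(v)$, each sending $\overline{E}$ into $K$. Under the contradiction hypothesis no $T_\lambda$ has a fixed point on $\partial E$ (for $\lambda=0$ this is clear, since $0\in E$), so the fixed-point index $i_K(T_\lambda,E)$ is defined for all $\lambda\ge 0$ and the family $(\lambda,v)\mapsto T_\lambda(v)$ is an admissible compact homotopy on every segment $\lambda\in[0,\Lambda]$. Homotopy invariance then gives $i_K(T_\Lambda,E)=i_K(T_0,E)$ for all $\Lambda>0$, and $i_K(T_0,E)=1$ because $T_0\equiv 0$ is a constant map whose value lies in $E$.

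The decisive step is to show that $i_K(T_\Lambda,E)=0$ once $\Lambda$ is large, and this is where the positivity of $m$ enters. For this I would invoke the standard index-zero lemma for cones: if $\inf_{\partial E}\|T_\Lambda\|>0$ and $T_\Lambda(v)\ne\mu v$ for all $v\in\partial E$ and all $\mu\in(0,1]$, then $i_K(T_\Lambda,E)=0$. The first hypothesis holds since $\inf_{v\in\partial E}\|T_\Lambda(v)\|=\Lambda\,\inf_{v\in\partial E}\|G(v)\|=\Lambda m>0$; the second holds because $T_\Lambda(v)=\mu v$ would give $v=(\Lambda/\mu)G(v)$ with $\Lambda/\mu>0$, contradicting the assumption. (In fact, under the contradiction hypothesis $T_\lambda(v)\ne\mu v$ for all positive $\lambda,\mu$, so the complementary index-one lemma, via the norm bound $\|T_\lambda(v)\|\le\lambda\|G\|_\infty$, applies for small $\lambda$ and would serve equally well.) Comparing with the homotopy computation yields $1=i_K(T_0,E)=i_K(T_\Lambda,E)=0$, a contradiction. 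Hence the pair $(v^*,\lambda^*)$ exists, and translating back gives $x^*=y+v^*\in\partial D_{K_y}$ with $x^*=y+\lambda^*\F(x^*)$.

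I expect the main obstacle to be the index-zero computation: one must set up the fixed-point index for compact maps on the (translate of the) cone correctly, verify that the homotopy $T_\lambda$ is admissible along the whole parameter segment — that is, that the contradiction hypothesis really does exclude boundary fixed points for every intermediate $\lambda$ — and check the hypotheses of the index-zero lemma, all of which hinge on exploiting $m=\inf_{\partial D_{K_y}}\|\F\|>0$ together with the boundedness of $E$ and the compactness of $G$. The remaining geometric points (that $0$ is interior to $E$, so $\partial E$ is bounded away from $0$, and that $\partial E\ne\emptyset$ thanks to $\overline{D}_{K_y}\ne K_y$) are routine but worth recording, since they guarantee both that the index is defined and that the boundary over which the infimum is taken is nonempty.
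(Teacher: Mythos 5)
First, note that the paper itself offers no proof of this statement: Theorem~\ref{BK-transl-norm} is imported verbatim from \cite{acgi2} (Corollary 2.4 there), so there is no in-paper argument to compare yours against; the comparison below is with the proof strategy of the cited source. Your argument is correct and is essentially the standard one. You translate by $-y$ to reduce the affine statement to the classical Birkhoff--Kellogg/Krasnosel'ski\u{i}--Lady\v{z}enski\u{\i} situation for a compact map $G$ on a bounded relatively open neighbourhood $E$ of the vertex $0$ in $K$, and then run the index dichotomy under the contradiction hypothesis: index $1$ via the admissible homotopy $t\Lambda G$ down to the constant map $0\in E$, and index $0$ via the lemma ``$\inf_{\partial E}\|T\|>0$ together with $Tv\neq\mu v$ for $v\in\partial E$, $\mu\in(0,1]$, implies $i_K(T,E)=0$''. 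The source \cite{acgi2} does the same index computation but works directly with the fixed point index on translates of cones in the sense of Djebali--Mebarki \cite{djeb2014}, i.e.\ it keeps the vertex at $\psi$ rather than translating back to the origin; your reduction buys you the ability to quote the classical index results (Guo--Lakshmikantham, Corollary 2.3.1) instead of their affine analogues, at no real cost since the translation is a homeomorphism of $K_y$ onto $K$ carrying $D_{K_y}$, its relative closure and its relative boundary onto $E$, $\overline{E}$ and $\partial E$. Two small points you should make explicit: the index-zero lemma is the load-bearing external input and deserves a precise citation together with a verification of its hypothesis $0\in E$ (which holds because $y\in D$); and the nonemptiness of $\partial E$ should be argued from the connectedness of $K$ (a cone is star-shaped about $0$) together with $E$ being nonempty, relatively open and with $\overline{E}\neq K$ --- boundedness of $E$ alone would not suffice if one did not first rule out $K=\{0\}$, which the hypothesis $\overline{D}_{K_y}\neq K_y$ does.
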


In order to apply Theorem~\ref{BK-transl-norm}, we make some assumptions on the following \emph{perturbed} Hammerstein integral equation:
\begin{equation}\label{eqhamm}
u(t)=\psi(t) + \lambda \Bigl( \int_{0}^{1} k(t,s)g(s) f(s, u(s), u(\sigma (s))\,ds +  \gamma(t) B[u] \Bigr)=:\psi(t) + \lambda \F u(t),\ t \in [-r, 1]
\end{equation}
where $B$ is a suitable (possibly nonlinear) functional in the space $C([-r, 1], \R)$, endowed of the usual supremum norm, $\|u\|_{[-r, 1]}$. More in general, given a compact interval $I\subset \R$, we denote by $C(I, \R)$ the Banach space of the
continuous functions defined on $I$ with the usual norm, $\|u\|_{I}$. We assume the following conditions: 
\begin{enumerate}
\item [$(C_{1})$] The function $\psi: [-r,1] \to \R$ is continuous.
\item [$(C_{2})$] The kernel $k:[-r,1] \times [0,1] \to\R$ is measurable,
verifies $k(t,s)=0$ for all $t\in[-r,0]$ and almost every (a.\,e.) $s \in[0,1]$, and
for every $\bar t \in
[0,1]$ we have
\begin{equation*}
\lim_{t \to \bar t} |k(t,s)-k(\bar t,s)|=0 \;\text{ for a.\,e. } s \in
[0,1].
\end{equation*}{}
\item [$(C_{3})$]
 There exist a subinterval $[a,b] \subseteq [0,1]$, a measurable function
$\Phi$ with $\Phi \geq 0$ a.\,e., and a constant $c_1=c_1(a,b) \in (0,1]$ such that
\begin{align*}
|k(t,s)|\leq \Phi(s) \text{ for  all }  &t \in [0,1] \text{ and a.\,e. } \, s\in [0,1],\\
k(t,s) \geq c_1\,\Phi(s) \text{ for  all } &t\in [a,b] \text{ and a.\,e. } \, s \in [0,1].
\end{align*}{}
\item [ $(C_{4})$] The function $g:[0,1] \to \R$ is measurable, $g(t) \geq 0$ a.\,e. $t \in [0,1]$,
and satisfies that $g\,\Phi \in L^1[0,1]$
and $\int_a^b \Phi(s)g(s)\,ds >0$.{}
\item [$(C_{5})$] $f: [0,1] \times \R \times \R \to [0,\infty)$ satisfies some 
Carath\'eodory-type conditions; namely, $ f(\cdot, u, v)$ is measurable for each fixed
$u$ and $v$ in $\R$, $f(t,\cdot,\cdot)$ is continuous for a.\,e. $t\in [0,1]$, and for each $R>0$, there exists $\phi_{R} \in
L^{\infty}[0,1]$ such that{}
\begin{equation*}
f(t,u,v)\le \phi_{R}(t) \;\text{ for all } \; (u,v)\in
[-R,R]\times [-R,R],\;\text{ and a.\,e. } \; t\in [0,1].
\end{equation*}{}
\item [ $(C_{6})$]  The function $\sigma:[0,1]\to [-r,1]$ is continuous.{}
\item [ $(C_{7})$]
The function $\gamma: [-r,1] \to \R$ is continuous, $\gamma\not\equiv 0$ and
such that $\gamma(t)=0$ for all $t\in[-r,0]$; moreover there exists $c_{2} \in(0,1] \;\text{such that}\;
\gamma(t) \geq c_{2}\|\gamma\|_{[0,1]} \;\text{for all }\; t \in [a,b]$.
\end{enumerate}
In the Banach space $C([-r, 1], \R)$ we define the cone
$$
K_0=\{u\in C([-r, 1], \R): u(t)=0\ \text{for all}\ t\in[-r,0],  \min_{t \in [a,b]}u(t)\geq c \|u\|_{[0,1]}\},
$$
where $c=\min\{c_1,c_2\}$. Note that $K_0 \neq \{0\}$ since $\gamma \in K_0$ and, furthermore, that the functions in $K_0$ are non-negative in the subset $[a,b]$ and may \emph{change sign} in $[0,1]$. The cone $K_0$ has been essentially introduced~\cite{acgi16}, as a modification of a cone introduced in~\cite{gijwjiea}. 

We consider the following translate of the cone $K_0$,
$$K_\psi=\psi + K_0 = \{\psi +u : u \in K_0\},$$ with the subset
$$K_{0,\rho}:=\{u\in K_0: \|u\|_{[0,1]} <\rho\}$$
and the corresponding translate
$$K_{\psi,\rho}:= \psi + K_{0,\rho}.$$
Note that
$\partial K_{\psi,\rho} = \psi + \partial K_{0,\rho}$
and that
$u\in  K_{\psi}$ means that $u=\psi+v$ with $v \in  K_{0}$ and, therefore, we have
\begin{equation*}
 \|u\|_{[-r,1]} =\max\{ \|\psi\|_{[-r,0]}, \|\psi+v\|_{[0,1]}\}.
\end{equation*}
We can now state our existence result.
\begin{thm}\label{eigen}Let $\rho \in (0,+\infty)$ and assume the following conditions hold. 

\begin{itemize}

\item[$(a)$] 
There exist $\underline{\delta}_{\rho} \in C([0,1],\R_+)$ such that
\begin{equation*}
f(t,u,v)\ge \underline{\delta}_{\rho}(t),\ \text{for every}\ t\in [a,b]\ \text{with}\ \max\{|u|,|v|\}\leq 
 \rho+\|\psi\|_{[-r,1]}.
\end{equation*}

\item[$(b)$] 
$B: \overline K_{\psi,\rho} \to \R_{+}$ is continuous and bounded, in particular let $\underline{\eta}_{\rho}\in [0,+\infty)$ be such that 
\begin{equation*}
B[u]\geq \underline{\eta}_{\rho},\ \text{for every}\ u\in \partial K_{\psi,\rho}.
\end{equation*}
\item[$(c)$] 
The inequality 
\begin{equation}\label{condc}
\sup_{t\in [a,b]}\Bigl\{ \gamma(t)\underline{\eta}_{\rho}+\int_{a}^{b}  k(t,s) \underline{\delta}_{\rho} (s)\,ds\Bigr\}>0 
\end{equation}
holds.
\end{itemize}
Then there exist $\lambda_\rho\in (0,+\infty)$ and $u_{\rho}\in \partial K_{\psi,\rho}$ that satisfy the integral equation~\eqref{eqhamm}.
\end{thm}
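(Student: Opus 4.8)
The plan is to apply the abstract Birkhoff--Kellogg result, Theorem~\ref{BK-transl-norm}, in the Banach space $X=C([-r,1],\R)$ with cone $K=K_0$, translation point $y=\psi$ and the operator $\F$ of~\eqref{eqhamm}. The first step is to produce an open bounded $D\subset X$ whose trace on the cone translate is exactly $K_{\psi,\rho}$. Since any $u=\psi+v\in K_\psi$ has $v$ vanishing on $[-r,0]$, one has $\|u-\psi\|_{[-r,1]}=\|v\|_{[0,1]}$, so the open ball $D=\{w\in X:\|w-\psi\|_{[-r,1]}<\rho\}$ satisfies $D_{K_\psi}=K_{\psi,\rho}$ and hence $\partial D_{K_\psi}=\partial K_{\psi,\rho}$. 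This $D$ is bounded and contains $\psi$; moreover $K_0$ is unbounded (for instance $n\gamma\in K_0$ for every $n$, with $\|\gamma\|_{[0,1]}>0$ by $(C_7)$), so $\overline{D}_{K_\psi}\ne K_\psi$. Thus the geometric hypotheses of Theorem~\ref{BK-transl-norm} are met.

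The second step is to check that $\F(\overline{K_{\psi,\rho}})\subseteq K_0$. Vanishing on $[-r,0]$ is immediate, since $k(t,\cdot)\equiv 0$ there by $(C_2)$ and $\gamma(t)=0$ there by $(C_7)$. For the Harnack-type inequality defining $K_0$, use $f\ge 0$, $g\ge 0$ and $B[u]\ge 0$ (from $(C_5)$, $(C_4)$, and $(b)$) together with the two-sided bounds $|k(t,s)|\le\Phi(s)$, $k(t,s)\ge c_1\Phi(s)$ on $[a,b]$ from $(C_3)$ and $\gamma(t)\ge c_2\|\gamma\|_{[0,1]}$ on $[a,b]$ from $(C_7)$: for $t\in[a,b]$,
\[
\F u(t)\ge c_1\int_0^1\Phi(s)g(s)f(s,u(s),u(\sigma(s)))\,ds+c_2\|\gamma\|_{[0,1]}B[u]\ge c\,\|\F u\|_{[0,1]},
\]
with $c=\min\{c_1,c_2\}$, which is exactly the cone condition. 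Compactness of $\F$ on $\overline{K_{\psi,\rho}}$ is routine and follows from the Carath\'eodory growth $(C_5)$, the integrability $g\Phi\in L^1$ of $(C_4)$, the continuity hypotheses $(C_2)$ and $(C_6)$, and the continuity and boundedness of $B$ in $(b)$, via the Arzel\`a--Ascoli theorem.

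The third step, where $(a)$--$(c)$ are used, is the uniform lower bound $\inf_{u\in\partial K_{\psi,\rho}}\|\F u\|>0$. For $u=\psi+v\in\partial K_{\psi,\rho}$ one has $\|u\|_{[-r,1]}\le\rho+\|\psi\|_{[-r,1]}$, so for $s\in[a,b]$ both $u(s)$ and $u(\sigma(s))$ lie in $[-(\rho+\|\psi\|_{[-r,1]}),\,\rho+\|\psi\|_{[-r,1]}]$ (recall $\sigma(s)\in[-r,1]$); condition $(a)$ then yields $f(s,u(s),u(\sigma(s)))\ge\underline{\delta}_\rho(s)$. Since $\F u\in K_0$ is nonnegative on $[a,b]$ and vanishes on $[-r,0]$, we have $\|\F u\|=\|\F u\|_{[0,1]}\ge\F u(t)$ for every $t\in[a,b]$; restricting the nonnegative integrand to $[a,b]$ and using $(b)$ gives
\[
\F u(t)\ge\int_a^b k(t,s)g(s)\underline{\delta}_\rho(s)\,ds+\gamma(t)\underline{\eta}_\rho,\qquad t\in[a,b],
\]
so that, taking the supremum over $t\in[a,b]$,
\[
\|\F u\|\ge\sup_{t\in[a,b]}\Bigl\{\gamma(t)\underline{\eta}_\rho+\int_a^b k(t,s)g(s)\underline{\delta}_\rho(s)\,ds\Bigr\}>0
\]
by~\eqref{condc}, a positive constant independent of $u$. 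Theorem~\ref{BK-transl-norm} now yields $u_\rho\in\partial K_{\psi,\rho}$ and $\lambda_\rho\in(0,+\infty)$ with $u_\rho=\psi+\lambda_\rho\F u_\rho$, i.e. a solution of~\eqref{eqhamm}.

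I expect the main obstacle to be the cone-invariance $\F(\overline{K_{\psi,\rho}})\subseteq K_0$: the functions involved are allowed to change sign and the nonlinearity carries the deviated argument $u(\sigma(s))$, so one must ensure the Harnack-type estimate survives these features. Once the uniform range bound $\|u\|_{[-r,1]}\le\rho+\|\psi\|_{[-r,1]}$ is established---which is what controls the deviated term $u(\sigma(s))$ uniformly on $\partial K_{\psi,\rho}$---both the lower estimate and the invariance go through, and the remaining verifications are standard.
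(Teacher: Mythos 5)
Your proposal is correct and follows essentially the same route as the paper's own proof: verify that $\F$ maps $\overline K_{\psi,\rho}$ into $K_0$ and is compact, derive the uniform lower bound $\inf_{u\in\partial K_{\psi,\rho}}\|\F u\|>0$ from $(a)$--$(c)$, and invoke Theorem~\ref{BK-transl-norm} (you merely make explicit the choice of the ball $D$ with $D_{K_\psi}=K_{\psi,\rho}$, which the paper leaves implicit). The only discrepancy is that you retain the weight $g(s)$ in the final integral while condition \eqref{condc} and the paper's own estimate omit it, so strictly speaking your last display is bounded below by a quantity that differs from the one in $(c)$; this is a cosmetic mismatch already present in the paper rather than a flaw in your argument.
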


\begin{proof}
We firstly show that the operator operator $\F$ maps
$\overline{K}_{\psi,\rho}$ into $K_0$ and is compact. 
Take $u \in \overline{K}_{\psi,R}$; we have to prove that $\F u  \in K_0$.
First of all observe that our assumptions imply that $\F u$ is continuous on $[-r,1]$ and that
$\F u(t)=0$ for $t \in [-r,0]$.
Now, for every $t \in [0,1]$ we have
\begin{align*}
|\F u(t) | & \leq \int_{0}^{1}
|k(t,s)| g(s) f(s, u(s), u(\sigma (s)) \,ds + |\gamma(t)| B[u] \\
&\leq    \int_{0}^{1} \Phi(s)g(s) f(s, u(s), u(\sigma (s))\,ds +\|\gamma\|_{[0,1]} B[u],
\end{align*}
moreover, for $t\in [a,b]$,
\begin{equation}\label{inv}
\F u(t)  \geq c_1
\int_{0}^{1} \Phi(s)g(s) f(s, u(s), u(\sigma (s))\,ds
+ c_2 \|\gamma\|_{[0,1]}B[u]
\geq c\|\F u \|_{[0,1]}.
\end{equation}
Taking the minimum for $t \in [a,b]$ in \eqref{inv} yields $\F u \in K_0$, as desired.
The compactness of $\F$ follows in a similar way as in the proof of Theorem 3.2 of~\cite{acgi16}, since $B$ is continuous and bounded.

Now, take $u\in \partial K_{\psi, \rho}$. Then we have, for $t \in [a,b]$,
\begin{multline*}
 \F u(t)= \Bigl( \int_{0}^{1} k(t,s)g(s) f(s, u(s), u(\sigma (s))\,ds +  \gamma(t) B[u] \Bigr)\\ 
 \geq \Bigl( \int_{a}^{b} k(t,s)g(s) f(s, u(s), u(\sigma (s))\,ds +  \gamma(t) B[u] \Bigr) \\
 \geq \gamma(t)\underline{\eta}_{\rho}+\int_{a}^{b}  k(t,s) \underline{\delta}_{\rho} (s)\,ds.
\end{multline*}
Therefore we have
\begin{equation}\label{estbk}
\|  \F u\|_{[-r, 1]}\geq  \|  \F u\|_{[a, b]}  \geq \sup_{t\in [a,b]}\Bigl\{ \gamma(t)\underline{\eta}_{\rho}+\int_{a}^{b}  k(t,s) \underline{\delta}_{\rho} (s)\,ds\Bigr\}.
\end{equation}
Note that the RHS of \eqref{estbk} does not depend on the particular $u$ chosen. Hence,
$$
\inf_{u\in \partial K_{\psi, \rho}}\|  \F u\|_{[-r, 1]} \geq \sup_{t\in [a,b]}\Bigl\{ \gamma(t)\underline{\eta}_{\rho}+\int_{a}^{b}  k(t,s) \underline{\delta}_{\rho} (s)\,ds\Bigr\}>0,
$$
and the result follows by Theorem~\ref{BK-transl-norm}.
\end{proof}

\section{Nontrivial solutions of the BVP}
We turn our attention back to the BVP
\begin{equation} \label{Syst}
 \begin{cases}
u''(t)+\lambda f(t, u(t), u(\sigma (t))=0,\ & t \in [0,1], \\
u(t)=\omega(t),\  & t \in [-r,0], \\
\beta u'(1)+u(\eta)=\lambda B[u].
 \end{cases}
\end{equation}
In order to apply the previous theory to the BVP~\eqref{Syst}, we proceed by  means of a superposition principle as in Section 3 of~\cite{acgi3}, in the spirit of \cite{acgi16, gi-wil, gi-jw-ems-06}.

We begin by considering the BVP
\begin{equation*} 
 \begin{cases}
u''(t)+y(t)=0,\ & t \in [0,1], \\
u(t)=0,\
\beta u'(1)+u(\eta)=0,
 \end{cases}
\end{equation*}
which has the 
has the unique solution
$$
u(t)= \int_{0}^{1} \hat{k}(t,s)y(s)ds,
$$
where the Green's function (see for example \cite{gi-jw-ems-06}) is given by
$$
\hat{k}(t,s)=\frac{\beta t}{\beta+\eta} +\dfrac{t}{\beta+\eta}(\eta-s)H(\eta-s)-
 (t-s) H(t-s),
$$
where
$$
H(\tau)=
\begin{cases}
1,\ & \tau \geq 0, \\0,\ & \tau<0,
\end{cases}
$$
thus we take 
\begin{equation}\label{def-ker}
k(t,s)=\hat{k}(t,s) H(t).
\end{equation}
With the choice of 
$[a,b] \subset (0, \beta+\eta) \subset (0, 1)$,
the hypotheses $(C_2)-(C_3)$ are satisfied (see \cite{acgi16}) when 
$$
\Phi (s)=\begin{cases}
s, & \text{ for }\beta+\eta\geq \frac{1}{2}, \\
\left[\dfrac{1 -(\beta + \eta )}{\beta+\eta}\right]s, & \text{ for
}\beta+\eta< \frac{1}{2},
\end{cases}
$$
and 
\begin{equation}\label{cbcA}
c_1=\begin{cases}
\min \Bigl\{ \dfrac{a \beta}{\beta+\eta},
\dfrac{\beta+\eta-b}{\beta+\eta} \Bigr\}, & \text{ for }\beta+\eta\geq \frac{1}{2}, \\
\min \Bigl\{ \dfrac{a \beta}{1 -(\beta + \eta)},
\dfrac{\beta+\eta-b}{1 -(\beta + \eta)} \Bigr\}
, & \text{ for
}\beta+\eta< \frac{1}{2}.
\end{cases}
\end{equation}
Note that also $(C_4)$ holds since $\int_a^b \Phi(s)\,ds >0$.
 
Now observe that the function $\hat{\gamma}(t)= \dfrac{ t}{\beta+\eta}$ satisfies the BVP
\begin{equation*}
 \begin{cases}
u''(t)=0,\ t \in [0,1], \\
u(0)=0,\
\beta u'(1)+u(\eta)=1.
 \end{cases}
\end{equation*}
Thus we may take 
\begin{equation} \label{def-gam}
\gamma(t)= \hat{\gamma}(t)H(t),
\end{equation}
and note that $(C_7)$ is satisfied with $c_2=a$ (see \cite{acgi16}). Note also that $c_2\geq c_1$, hence we take $c=c_1$.

Finally note that $\hat{\varphi}(t)=\dfrac{(\beta + \eta )-t}{\beta+\eta}$ solves the BVP
\begin{equation*}
 \begin{cases}
u''(t)=0,\ t \in [0,1], \\
u(0)=1,\
\beta u'(1)+u(\eta)=0
 \end{cases}
\end{equation*}
Thus, define
\begin{equation} \label{def-psi}
\psi(t) =
\begin{cases}
\omega(t),\ & t \leq 0, \\
\hat{\varphi}(t) \omega(0),\ & t > 0,
\end{cases}
\end{equation}
and observe that, by construction, $\psi$ is continuous on $[-r,1]$, since $\omega$ is assumed to be continuous on $[-r,0]$. 
Note that $\psi$, the vertex of the affine cone that we utilize, is built from the initial datum $\omega$, for which we do not require homogeneity in $0$, as in \cite{acgi3}.

Summing up, we can consider the integral equation
\begin{equation}\label{eqhamm-sol}
u(t)=\psi(t) + \lambda \Bigl( \int_{0}^{1} k(t,s) f(s, u(s), u(\sigma (s))\,ds +  \gamma(t) B[u] \Bigr),\quad t \in [-r, 1]
\end{equation}
where $k$ is as in \eqref{def-ker}, $\gamma$ is as in \eqref{def-gam} and $\psi$ is as in \eqref{def-psi}.
\begin{defn}
By a solution of  the BVP \eqref{Syst}
we mean a solution $u \in  C([-r, 1], \R)$
of the integral equation \eqref{eqhamm-sol}.
\end{defn}
With the above ingredients, we can state the following existence result.
\begin{thm} \label{example-thm}
Let $f: [0,1] \times \R \times \R \to [0,\infty)$ and $\sigma:[0,1]\to [-r,1]$ be continuous. Let $[a,b] \subset (0, \beta+\eta) \subset (0, 1)$,
and let $c=c_1$ as in \eqref{cbcA}. 
Let $\rho \in (0,+\infty)$ and assume that conditions 
$(a)$, $(b)$ and $(c)$ of Theorem \ref{eigen} hold. 
Then there exist $\lambda_\rho\in (0,+\infty)$ and $u_{\rho}\in \partial K_{\psi,\rho}$ that satisfy the BVP \eqref{Syst}.
\end{thm}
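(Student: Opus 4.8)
The plan is to obtain Theorem~\ref{example-thm} as a direct corollary of Theorem~\ref{eigen}, once the concrete data produced by the Green's function construction have been matched to the abstract framework. By the Definition preceding the statement, a solution of the BVP~\eqref{Syst} \emph{is}, by convention, a solution $u\in C([-r,1],\R)$ of the perturbed Hammerstein equation~\eqref{eqhamm-sol}; hence it suffices to produce $\lambda_\rho\in(0,+\infty)$ and $u_\rho\in\partial K_{\psi,\rho}$ solving~\eqref{eqhamm-sol}. Since~\eqref{eqhamm-sol} is exactly of the form~\eqref{eqhamm} with the weight $g\equiv 1$, the entire proof reduces to verifying hypotheses $(C_1)$--$(C_7)$ for the kernel $k$ of~\eqref{def-ker}, the function $\gamma$ of~\eqref{def-gam} and the vertex $\psi$ of~\eqref{def-psi}, and then invoking Theorem~\ref{eigen}.

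First I would collect the structural verifications already recorded in the discussion preceding the statement. Continuity of $\psi$ on $[-r,1]$ gives $(C_1)$; the estimates on $\hat k$ yield $(C_2)$ and $(C_3)$, with the stated $\Phi$ and the constant $c_1$ of~\eqref{cbcA}, on the chosen interval $[a,b]\subset(0,\beta+\eta)$; since $g\equiv 1$ and $\int_a^b\Phi(s)\,ds>0$, condition $(C_4)$ holds; the properties of $\gamma=\hat\gamma H$ give $(C_7)$ with $c_2=a$, whence $c=\min\{c_1,c_2\}=c_1$; and $(C_6)$ is the assumed continuity of $\sigma$. The only genuinely new point is $(C_5)$: because $f$ is continuous on $[0,1]\times\R\times\R$ with values in $[0,\infty)$, it is in particular of Carath\'eodory type, and its continuity forces, for each $R>0$, the bound $\sup_{|u|,|v|\le R} f(t,u,v)\le \phi_R(t)$ with $\phi_R\in L^\infty[0,1]$, so $(C_5)$ follows from compactness of the relevant domain.

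With $(C_1)$--$(C_7)$ in force, conditions $(a)$, $(b)$ and $(c)$ of Theorem~\ref{eigen} are precisely those assumed in the hypotheses of Theorem~\ref{example-thm}. Applying Theorem~\ref{eigen} therefore furnishes $\lambda_\rho\in(0,+\infty)$ and $u_\rho\in\partial K_{\psi,\rho}$ solving~\eqref{eqhamm-sol}, which by definition is a solution of the BVP~\eqref{Syst}.

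I expect the main point requiring care to lie not in the final application, which is immediate once the framework is set up, but in the \emph{superposition decomposition} underlying the Definition: namely that writing $u=\psi+v$, with $\psi$ absorbing the initial datum $\omega$ through $\hat\varphi$ and the boundary contribution $\lambda B[u]$ through $\hat\gamma$, really renders~\eqref{eqhamm-sol} equivalent to~\eqref{Syst}. Concretely one differentiates~\eqref{eqhamm-sol} twice on $[0,1]$, using $\hat\varphi''=\hat\gamma''=0$ together with $\frac{d^2}{dt^2}\int_0^1\hat k(t,s)y(s)\,ds=-y(t)$, to recover $u''+\lambda f=0$; one checks $u\equiv\omega$ on $[-r,0]$ from $k(t,\cdot)=\gamma(t)=0$ there; and one evaluates $\beta u'(1)+u(\eta)$ via $\beta\hat\varphi'(1)+\hat\varphi(\eta)=0$ and $\beta\hat\gamma'(1)+\hat\gamma(\eta)=1$ to obtain the functional boundary condition $\lambda B[u]$. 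This bookkeeping is routine, but it is where the choice of $\psi$ and the various constants must all be consistent.
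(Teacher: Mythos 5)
Your proposal is correct and follows essentially the same route as the paper: the authors verify $(C_1)$--$(C_7)$ for the concrete $k$, $\gamma$, $\psi$ (with $g\equiv 1$) in the discussion preceding the statement, define a solution of the BVP~\eqref{Syst} to \emph{be} a solution of the integral equation~\eqref{eqhamm-sol}, and then the theorem is an immediate application of Theorem~\ref{eigen}. Your closing remarks on the superposition decomposition are a reasonable sanity check but are not needed given that definition, exactly as you note.
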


In the next illustrating example we consider a specific BVP of type
\eqref{Syst}, with the choice $r=1$ and $\sigma(s)=-s$, namely an equation with reflection of the argument.

\begin{ex}
We  consider the  BVP

\begin{equation} \label{Syst-ex}
 \begin{cases}
u''(t)+\lambda  te^{u(t)+2u(-t)}=0,\ & t \in [0,1], \\
u(t)=\sqrt{1+t},\  & t \in [-1,0], \\
\frac14 u'(1)+u(\frac14)=\lambda \int_{-1}^1t^2(u(t))^2\, dt.
 \end{cases}
\end{equation}

Thus the function $\psi$ is given by
$$\psi(t) =
\begin{cases}
\sqrt{1+t},\ &  -1 \leq t \leq 0, \\
1-2t,\ & 0< t \leq 1.
\end{cases}
$$

Now choose $\rho\in (0,+\infty)$ and note that $\|\psi\|_{[-1,1]}=1$. We may take 
$$[a,b]=\left[\frac18,\frac14\right],\ 
\underline{\eta}_{\rho}(t)=0, \ \underline{\delta}_{\rho}(t)=te^{-3(1+\rho)}.$$

Therefore \eqref{condc} reads
$$
\sup_{t\in [\frac18,\frac14]}\Bigl\{e^{-3(1+\rho)} \int_{\frac18}^{\frac14}  k(t,s) s\,ds\Bigr\}\geq  e^{-3(1+\rho)}\int_{\frac18}^{\frac14}  \frac{1}{16} s^2 \,ds=\frac{7e^{-3(1+\rho)}}{24576} >0,
$$
which implies that \eqref{condc} is satisfied for every $\rho \in (0,+\infty)$.

Thus we can apply Theorem~\ref{example-thm} obtaining uncountably many pairs of solutions and parameters $(u_{\rho}, \lambda_{\rho})$ for the BVP
 \eqref{Syst-ex}.
\end{ex}

\section*{Acknowledgements}
The authors were partially supported by
 the Gruppo Nazionale per l'Analisi Matematica, la Probabilit\`a e le loro Applicazioni (GNAMPA) of the Istituto Nazionale di Alta Matematica (INdAM).
G.~Infante is a member of the UMI Group TAA  ``Approximation Theory and Applications''.  This paper was partially written during the
visit of A.~Calamai to the Dipartimento di Matematica e Informatica of the 
Universit\`a della Calabria. A.~Calamai is grateful to the people of the aforementioned
Dipartimento for their kind and warm hospitality.

\end{document}